 \newtheorem{thm}{Theorem}[section]
 \newtheorem{prop}[thm]{Proposition}
 \theoremstyle{definition}
 \newtheorem{defn}[thm]{Definition}
 \theoremstyle{remark}
 \newtheorem*{ex}{Example}
 \numberwithin{equation}{section}
\begin{document}

%-------------------------------------------------------------------------
% editorial commands: to be inserted by the editorial office
%
%\firstpage{1} \volume{228} \Copyrightyear{2004} \DOI{003-0001}
%
%
%\seriesextra{Just an add-on}
%\seriesextraline{This is the Concrete Title of this Book\br H.E. R and S.T.C. W, Eds.}
%
% for journals:
%
%\firstpage{1}
%\issuenumber{1}
%\Volumeandyear{1 (2004)}
%\Copyrightyear{2004}
%\DOI{003-xxxx-y}
%\Signet
%\commby{inhouse}
%\submitted{March 14, 2003}
%\received{March 16, 2000}
%\revised{June 1, 2000}
%\accepted{July 22, 2000}
%
%
%
%---------------------------------------------------------------------------
%Insert here the title, affiliations and abstract:
%

\title[Some Topological Invariants of Generalized M\"{o}bius Ladder]
 {\begin{center}Some Topological Invariants of Generalized M\"{o}bius Ladder \end{center}}

%----------Author 1
\author{Numan Amin}
\address{Abdus Salam School of Mathematical Sciences, GC University, Lahore-Pakistan}
\email{numan.amin@sms.edu.pk}
%----------Author 2
\author{Abdul Rauf Nizami}
\address{Abdus Salam School of Mathematical Sciences, GC University, Lahore-Pakistan}
\email{arnizami@sms.edu.pk}
%----------Author 3
\author{Muhammad Idrees}
\address{School of Automation, Beijing Institute of Technology, Beijing-China}
\email{idrees@bit.edu.cn}
%\thanks{This research is partially supported by Higher Education Commission of Pakistan.}

%----------classification, keywords, date

\maketitle
%%% ----------------------------------------------------------------------
%%\begin{center}Abdul Rauf NIZAMI\\
%%$^{*}$Division of Science and Technology\\ University of Education, Lahore-Pakistan\\
%%arnizami@ue.edu.pk\\
%%\vspace{5pt}
%%Mobeen MUNIR$^{*}$\\
%%mobeenmunir@gmail.com
%%\end{center}

%\vspace{20pt}
%\date{today}%February 08, 2013
%----------additions
%\dedicatory{To my boss}
%%-------------------------------------------------------------------------------------------------
\begin{abstract}
The Hosoya polynomial of a graph $G$ was introduced by H. Hosoya in 1988 as a counting polynomial, which actually counts the number of distances of paths of different lengths in $G$. The most interesting application of the Hosoya polynomial is that almost all distance-based graph invariants, which are used to predict physical, chemical and pharmacological properties of organic molecules, can be recovered from it. In this article we give the general closed form of the Hosoya polynomial of the generalized M\"{o}bius ladder $M(m,n)$ for arbitrary $m$ and for $n=3$. Moreover, we recover Wiener, hyper Wiener, Tratch-Stankevitch-Zefirov, and Harary indices from it.
\end{abstract}
%%%---------------------------------------------------------------------
\subjclass{\textbf{Subject Classification (2010)}.  05C12, 05C31}

%%%------------------------------------------------------------------------
\keywords{\textbf{Keywords}. Hosoya polynomial, Generalized M\"{o}bius ladder, Topological indices}
%%% ---------------------------------------------Hosoya polynomial-------------------------
%\maketitle

\pagestyle{myheadings}
\markboth{\centerline {\scriptsize
Numan, Nizami, and Idrees}} {\centerline {\scriptsize
 Some Topological Invariants of Generalized M\"{o}bius Ladder}}
%%% ----------------------------------------------------------------------
%\tableofcontents
%%----------------------------------------------- section --------------------------------------------------------
%%----------------------------------------------- section --------------------------------------------------------
%\tableofcontents
\section{Introduction}\label{sec1}
The Hosoya polynomial of a graph was introduced by H. Hosoya in 1988 as a counting polynomial; it actually counts the number of distances of paths of different lengths in the graph \cite{Hosoya:88}. \\

\noindent Hosoya polynomial is very well studied. In 1993, Gutman introduced Hosoya polynomial for a vertex of a graph \cite{Gutman:93}, which is related with Hosoya polynomial of the graph. The most interesting application of the Hosoya polynomial is that almost all distance-based graph invariants, which are used to predict physical, chemical and pharmacological properties of organic molecules, can be recovered from it.\\%(???T. Mansour, M. Schork, Wiener, hyper-Wiener, DETOUR and HYPER-DETOUR indices of bridge and chain graphs, J. Math. Chem. (2009) DOI 10.1007/s10910-009-9531-7)\\

\noindent Hosoya polynomial has been computed for several classes of graphs. In 2002 Diudea computed the Hosoya polynomial of several classes of toroidal nets and recovered their Wiener indices \cite{Diudea:02}. In 2011 Ali found the Hosoya polynomial of concatenated pentagonal rings \cite{Ali:11}. In 2012 Kishori gave a recursive method for calculating the Hosoya polynomial of Hanoi graphs, and computed some of their distance-based invariants \cite{Kishori:12}. In 2013 Farahi computed the Hosoya polynomial of polycyclic aromatic hydrocarbons \cite{Farhani:13}.\\

\noindent There are some useful topological indices that are related to Hosoya polynomial as Wiener, hyper Wiener, Tratch-Stankevitch-Ziefirov (TSZ), and Harary indices. The Wiener index was introduced by Harry Wiener in 1947 and was used to correlate with boiling points of alkanes \cite{Wiener:47}. Later it was observed that the Wiener index can be used to determine a number of physico-chemical properties of alkanes as heats of formation, heats of vaporization, molar volumes, and molar refractions \cite{Gutman:86}. Moreover, it can be used to correlate those physico-chemical properties which depend on the volume-surface ratio of molecules and to Gas-chromatographic retention data for series of structurally related molecules. Another topological index whose mathematical properties are relatively well investigated is the hyper-Wiener index and was introduced by Randic in 1993 \cite{Randic:93}. It is also used to predict physico-chemical properties of organic compounds, particularly to pharmacology, agriculture, and environment protection \cite{Cash:02}; for more details, see also \cite{Randic2:93,Lukovits:94,Klein:95,Diudea:96,Gutman:97}. In 1993 Plavsic \emph{et al.} introduced a new topological index, known as Harary index, to characterize chemical graphs \cite{Plavsic:93}. Tratch, Stankevitch, and Zefirov introduced Tratch-Stankevitch-Zefirov (TSZ) index as expanded Wiener index in 1990 \cite{Tratch:90}.\\

\noindent This article is organized as follows: Section~\ref{sec2} covers the basic definitions as of graph, distance, Hosoya polynomial, topological index, and generalized M\"{o}bius ladder; Section~\ref{sec3} contains the genral closed forms of the Hosoya polynomial of generalized M\"{o}bius ladder for arbitrary $m$ and $n=3$; Section~\ref{sec4} covers the topological indices of this ladder.

\section{Basic Definitions}\label{sec2}
\noindent A \emph{graph} $G$ is a pair $(V,E)$, where $V$ is the set of vertices and $E$ is the set of edges. A \emph{path} from a vertex $v$ to a vertex $w$ in a graph $G$ is a sequence of vertices and edges that starts from $v$ and stops at $w$. The number of edges in a path is the \emph{length} of that path. A graph is said to be \emph{connected} if there is a path between any two of its vertices. The \emph{distance} $d(u,v)$ between two vertices $u,v$ of a connected graph $G$ is the length of a shortest path between them. The \emph{diameter} of $G$, denoted by $d(G)$, is the longest distance in $G$. 
\begin{center}
\begin{minipage}{5cm}
\centering  \epsfig{figure=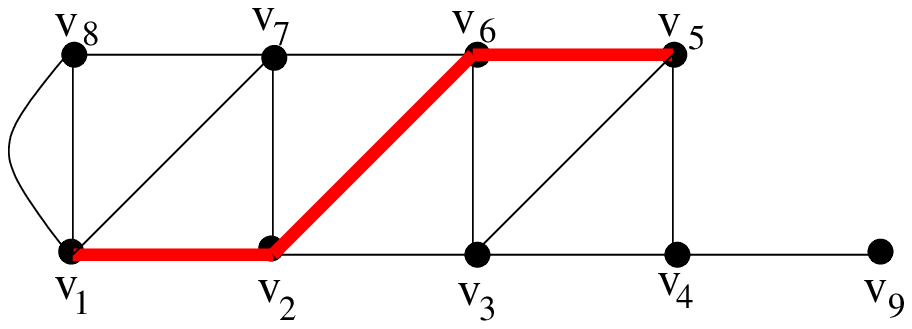,height=1.5cm}
\par \tiny{Figure 1: A connected graph with a highlighted shortest path from $v_{1}$ to $v_{5}$}
\end{minipage}
\end{center}
\noindent A \emph{molecular graph} is a representation of a chemical compound in terms of graph theory. Specifically, molecular graph is a graph whose vertices correspond to (carbon) atoms of the compound and whose edges correspond to chemical bonds. For instance, Figure 2 represents the molecular graph of 1-bromopropyne ($CH_{3}-C\equiv C-Br$).
\begin{center}
\begin{minipage}{10cm}
\centering  \epsfig{figure=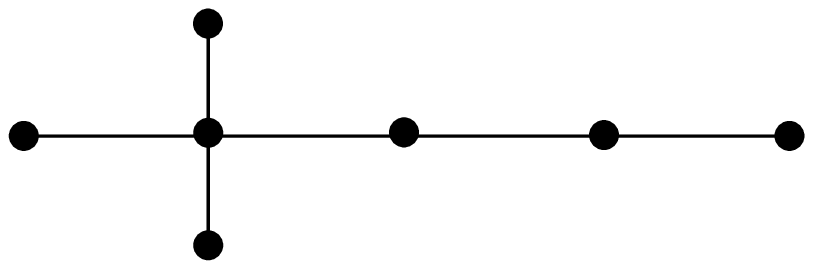,height=1.5cm}
\par \tiny{Figure 2: The molecular graph of 1-bromopropyne}
\end{minipage}
\end{center}
\begin{defn}
The \emph{Hosoya polynomial} in a variable $x$ of a molecular graph $G=(V,E)$ is defined as $$H(G,x) = \sum_{\{v,u\}\in V} x^{d(u,v)}=\sum_{k=1}^{d(G)} d(G,k)x^{k},$$
where $d(G,k)$ is the number of pairs of vertices of $G$ laying at distance $k$ from each other.
\end{defn}

\noindent A function $I$ which assigns to every connected graph $G$ a unique number $I(G)$ is called a \emph{graph invariant}. Instead of the function $I$ it is custom to say the number $I(G)$ as the invariant. An invariant of a molecular graph which can be used to determine structure-property or structure-activity correlation is called the \emph{topological index}. A topological index is said to be\emph{distance-based} if it depends on paths in the graph.\\

\noindent The following are definitions of some those distance-based indices that have connections with the Hosoya polynomial.\\

\noindent Let $u,v$ be arbitrary vertices of a connected graph $G=(V,E)$, and let $d(v,G)$ is the sum of distances of $v$ with all vertices of $G$. The \emph{Wiener index} $W(G)$ of the graph $G$ is defined as $$W(G) = \sum_{v<u;u,v\in V} d(v,u)= \frac{1}{2}\sum_{v\in V} d(v,G).$$
\noindent The Wiener index and the Hosoya polynomial are related by the equation
 $$W(G) = \frac{d}{dx}H(G,x)|_{x=1}.$$

\noindent The \emph{hyper-Wiener index} $WW(G)$ of a graph $G$ is defined as $$WW(G) = \sum_{v<u;u,v\in V} d(v,u)= \frac{1}{2}\sum d(v,u)^{2}+\frac{1}{2}\sum d(v,u).$$
\noindent The hyper-Wiener index and the Hosoya polynomial are related by the equation
 $$WW(G) = \frac{1}{2}\frac{d^{2}}{dx^{2}}xH(G,x)|_{x=1}.$$
 
\noindent The \emph{Harary index} $Ha(G)$ of a graph $G$ is defined as $$Ha(G) = \sum_{i<j}\frac{1}{d(u_{i},v_{j})^{2}},$$
and is related to Hosoya polynomial by 
 $$Ha(G) = \int_{0}^{1}\frac{H(G,x)}{x}dx.$$

\noindent The Tratch-Stankevitch-Zefirov index is also related to the Hosoya polynomial under the relation
 $$TSZ(G) = \frac{1}{3!}\frac{d^{3}}{dx^{3}}x^{2}H(G,x)|_{x=1}.$$

 \begin{defn}\cite{Hongbin-Idrees-Nizami-Munir:17} Consider the Cartesian product $P_{m}\times P_{n}$ of paths $P_{m}$ and $P_{n}$ with vertices $u_{1},u_{2},\ldots,u_{m}$ and $v_{1},v_{2},\ldots,u_{n}$, respectively. Take a $180^{o}$ twist and identify the vertices $(u_{1},v_{1}),(u_{1},v_{2}),\ldots,(u_{1},v_{n})$ with the vertices $(u_{m},v_{n})$, $(u_{m},v_{n-1})$, $\ldots,(u_{m},v_{1})$, respectively, and identify the edge $\big((u_{1},i)$, $(u_{1},i+1)\big)$ with the edge $\big((u_{m},v_{n+1-i})$, $(u_{m},v_{n-i})\big)$, where $1\leq i\leq n-1$. What we receive is the \emph{generalized M\"{o}bius ladder} $M_{m,n}$.
\end{defn}

\noindent You can see $M_{7,3}$ in the following figure.
 \begin{center}
\begin{minipage}{10cm}
\centering  \epsfig{figure=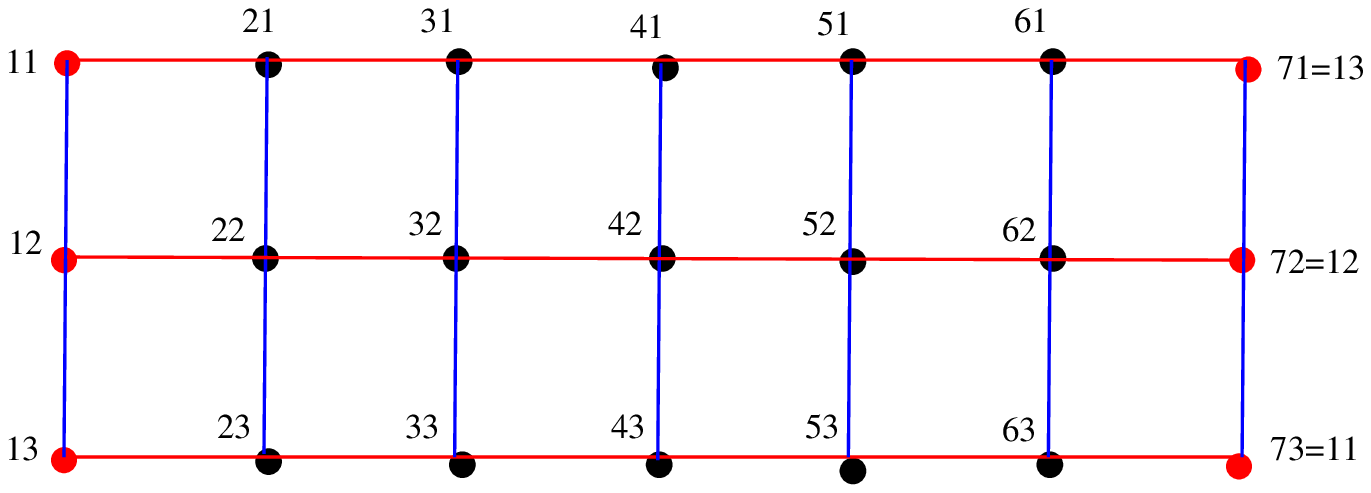, height=2.5cm}
\par \tiny {Figure 3: The grid form of the generalized M\"{o}bius ladder $M_{7,3}$ }%$P_{7}\times P_{3}$ with complete simple labels
\end{minipage}
\end{center}

The original form of $M_{7,3}$ is:
\begin{center}
\begin{minipage}{10cm}
\centering  \epsfig{figure=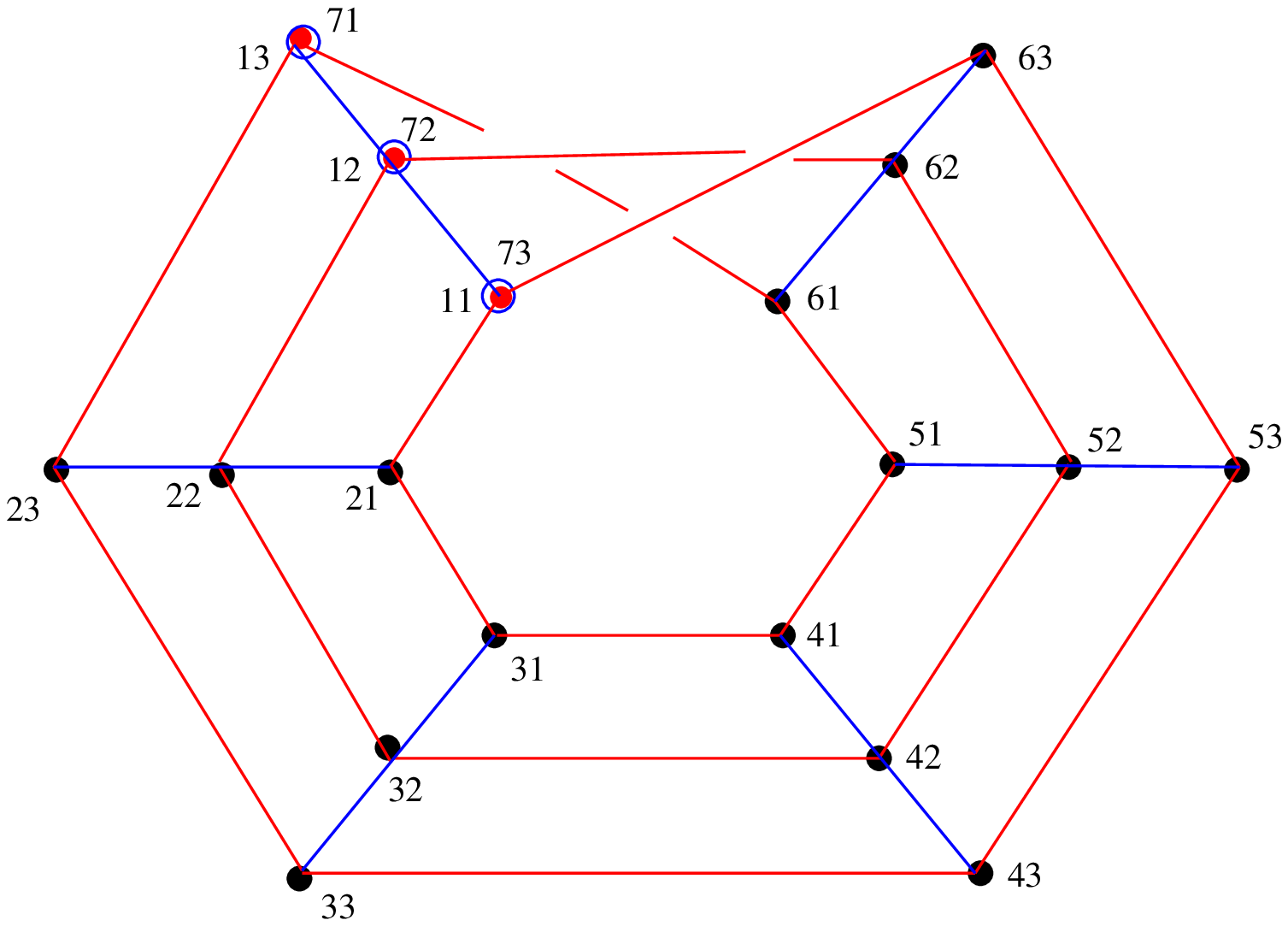, height=3.0cm}
\par Figure 4: The generalized M\"{o}bius ladder $M_{7,3}$
\end{minipage}
\end{center}

%%-----------------------------------------------------------------------------------
\section{Main Results}\label{sec3}
In this section we give the closed form of the Hosoya polynomial of the generalized M\"{o}bius ladder $M_{m,n}$ for $n=3$. Actually, we give two results, one for even $m$ and other for odd $m$.
\begin{thm}\label{thm3.1}
 The Hosoya polynomial of the $M_{m,3}$, when $m\geq 6$ is even, is $H(M_{m,3})=\sum_{k=1}^{\frac{m}{2}}c_{k}x^{k}$, where
  $$ c_{k}=\left\{
                  \begin{array}{ll}
                    5(m-1), &  k=1  \\
                    8(m-1), & k=2\\
                    9(m-1), &  2<k< \frac{m}{2}\\
                    8(m-1), & k=\frac{m}{2}.
                  \end{array}
                \right.$$
 \end{thm}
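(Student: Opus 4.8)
The plan is to unroll $M_{m,3}$ into the bi-infinite ladder and then use its rotational symmetry so that the entire calculation collapses to measuring distances from only two base vertices. Concretely, I would work with the model $\widetilde{G}=\mathbb{Z}\times\{1,2,3\}$, with horizontal edges $(i,a)$--$(i+1,a)$ and vertical edges $(i,1)$--$(i,2)$ and $(i,2)$--$(i,3)$; as a Cartesian product of $P_\infty$ with $P_3$ its metric is $d\big((i,a),(j,b)\big)=|i-j|+|a-b|$. The M\"{o}bius identifications of the definition are precisely the orbits of the glide reflection $g(i,a)=(i+(m-1),\,4-a)$, so $M_{m,3}=\widetilde{G}/\langle g\rangle$, with the $3(m-1)$ vertices in columns $0\le i\le m-2$ forming a fundamental domain. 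Since $g$ acts freely by isometries, the quotient distance is obtained by minimizing over the deck group:
$$d_M\big(\overline{(i,a)},\overline{(j,b)}\big)=\min_{n\in\mathbb{Z}}\Big(|i-j-n(m-1)|+\big|a-b^{(n)}\big|\Big),$$
where $b^{(n)}=b$ for $n$ even and $b^{(n)}=4-b$ for $n$ odd.

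First I would record the symmetry. The column shift $t\colon(i,a)\mapsto(i+1,a)$ commutes with $g$ and hence descends to an automorphism of $M_{m,3}$ of order $2(m-1)$; under the cyclic group it generates, the $2(m-1)$ outer vertices (rows $1$ and $3$) form a single orbit and the $m-1$ middle vertices (row $2$) form a second orbit. Writing $\phi_k(v)=|\{w:d_M(v,w)=k\}|$, each unordered pair at distance $k$ is counted twice in $\sum_v\phi_k(v)$, and $\phi_k$ is constant on orbits, so choosing an outer base vertex $v_A$ and a middle base vertex $v_B$ gives
$$2\,c_k=2(m-1)\,\phi_k(v_A)+(m-1)\,\phi_k(v_B),\qquad\text{i.e.}\qquad c_k=(m-1)\Big(D_A(k)+\tfrac12 D_B(k)\Big),$$
with $D_A=\phi_k(v_A)$ and $D_B=\phi_k(v_B)$. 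This already explains the common factor $(m-1)$ in the statement, and reduces the whole problem to two one-vertex distance counts.

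Next I would evaluate the displayed minimum at $v_A=\overline{(0,1)}$ and $v_B=\overline{(0,2)}$, letting the target column $j$ run over $0\le j\le m-2$. The middle case is clean, because the middle row is fixed by the flip: $d_M(v_B,\overline{(j,b)})=|2-b|+\min\{j,\,m-1-j\}$, and a short count (using that $m-1$ is odd) yields $D_B(1)=4$, $D_B(k)=6$ for $2\le k\le\tfrac{m}{2}-1$, and $D_B(\tfrac{m}{2})=4$. The outer case is the delicate one: only the terms $n=0$ and $n=-1$ can be minimal, giving $d_M(v_A,\overline{(j,b)})=\min\{\,j+|1-b|,\ (m-1-j)+|3-b|\,\}$, and since the glide reflection swaps rows $1$ and $3$ one must split by the three target rows and by the two column ranges in which each branch of the minimum wins. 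Assembling these pieces gives $D_A(1)=3$, $D_A(2)=5$, and $D_A(k)=6$ for $3\le k\le\tfrac{m}{2}$.

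Finally, substituting into $c_k=(m-1)\big(D_A(k)+\tfrac12 D_B(k)\big)$ produces $c_1=(m-1)(3+2)=5(m-1)$, $c_2=(m-1)(5+3)=8(m-1)$, $c_k=(m-1)(6+3)=9(m-1)$ for $2<k<\tfrac{m}{2}$, and $c_{m/2}=(m-1)(6+2)=8(m-1)$, exactly the claimed values; as a check, $M_{m,3}$ has $3(m-1)$ vertices and the coefficients sum to $\binom{3(m-1)}{2}$. The main obstacle is the outer count $D_A(k)$: the twist forces the two-term minimum and makes the three rows behave asymmetrically, so that the boundary values at $k=1,2$ and at $k=\tfrac{m}{2}$ peel away from the interior value $9(m-1)$. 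It is precisely the parity of $m$ (equivalently $m-1$ odd) that pins down where the crossover columns fall and keeps the endpoint coefficient at $k=\tfrac{m}{2}$ equal to $8(m-1)$ rather than something else, which is why the odd-$m$ case needs a separate argument.
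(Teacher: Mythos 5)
Your proposal is correct, and it takes a genuinely different route from the paper. The paper works directly with the $3(m-1)\times 3(m-1)$ distance matrix of $M_{m,3}$: it asserts a block structure in which a $3\times 3$ block $B_{q}$ occupies the $q$th secondary diagonal $m-q-1$ times, and then obtains each $c_{k}$ by counting how often the entry $k$ occurs across all blocks of the upper triangle. You instead realize $M_{m,3}$ as the quotient of the infinite ladder $\mathbb{Z}\times P_{3}$ by the glide reflection $g(i,a)=(i+m-1,4-a)$, use the covering formula $d_{M}=\min_{n}\bigl(|i-j-n(m-1)|+|a-b^{(n)}|\bigr)$ (with only $n=0,-1$ ever relevant), and exploit the shift automorphism of order $2(m-1)$ to reduce everything to the two distance sequences $D_{A},D_{B}$ of one outer and one middle base vertex, whence $c_{k}=(m-1)\bigl(D_{A}(k)+\tfrac12 D_{B}(k)\bigr)$. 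I verified your values $D_{A}=(3,5,6,\dots,6)$ and $D_{B}=(4,6,\dots,6,4)$, which reproduce the stated coefficients, and your checksum $\sum_k c_k=\binom{3(m-1)}{2}$ holds. What your approach buys is a structural explanation of the common factor $(m-1)$ and an actual derivation of the distances (the paper simply exhibits the blocks $B_{q}$ without proving they are the distances), at the cost of invoking the covering-space distance formula, which deserves the one-line justification that orbit points lie at distance at least $m-1\ge 5$ so the quotient map is a genuine graph covering. The paper's approach is more elementary and more easily adapted to the odd-$m$ case by swapping out the middle blocks, but it requires tracking many more cases; do make sure the sketched counts for $D_{A}$ (the split by target row and by which branch of the two-term minimum wins) are written out in full in a final version, since that is where the boundary values at $k=1,2,\tfrac{m}{2}$ arise.
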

\begin{proof}
In order to find the $k$th coefficient of the Hosoya polynomial we first give the upper triangular entries of the $3(m-1) \times 3(m-1)$ distance matrix $D=(d_{ij})$ corresponding to $M_{m,3}$ and then find the sum of all the entries $d_{ij}=k,j>i,$ to find $c_{k}$.\\

\noindent Since $D$ is symmetric, we only need to know its upper-triangular part. For this we use only $m-1$ matrices $B_{l}, 0\leq q \leq m-2$, each having order $3\times 3$. Each $B_{q}$ appears $m-q-1$ times on the $q$th secondary diagonal; by the $0$th secondary diagonal we mean the main diagonal. These matrices are\\

\noindent $B_{0}=\left(
  \begin{array}{ccc}
    0 & 1 & 2 \\
    . & 0 & 1 \\
    . & . & 0 \\
  \end{array}
\right),$ 
$B_{q}=\left(
  \begin{array}{ccc}
    q-1 & q & q+1 \\
    q & q-1 & q \\
    q+1 & q & q-1 \\
  \end{array}
\right)$ for $q=1,2,\dots,\frac{m-4}{2},$
$B_{\frac{m-2}{2}}=\left(
  \begin{array}{ccc}
    \frac{m-4}{2} & \frac{m-2}{2} & \frac{m-2}{2} \\
    \frac{m-2}{2} & \frac{m-4}{2} & \frac{m-2}{2} \\
    \frac{m-2}{2} & \frac{m-2}{2} & \frac{m-4}{2} \\
  \end{array}
\right),$
$B_{\frac{m}{2}}=\left(
  \begin{array}{ccc}
    \frac{m+2}{2} & \frac{m+2}{2} & \frac{m}{2} \\
    \frac{m+2}{2} & \frac{m}{2} & \frac{m+2}{2} \\
    \frac{m}{2} & \frac{m+2}{2} & \frac{m+2}{2} \\
  \end{array}
\right),$
and
$B_{q}=\left(
  \begin{array}{ccc}
    m-q+2 & m-q+1 & m-q \\
    m-q+1 & m-q & m-q+1 \\
    m-q & m-q+1 & m-q+2 \\
  \end{array}
\right)$ for $q=\frac{m+2}{2},\frac{m+4}{2},\ldots,m-2.$\\

\noindent We now give the coefficients $c_{k}$ of the polynomial. Depending on the special behavior, we give $c_{1}$, $c_{2}$, and $c_{\frac{m}{2}}$ one-by-one, and give all the remaining in a single general form:\\

\noindent $c_{1}$: The entry $1$ appears only in $B_{0}$, $B_{1}$, and $B_{m-2}$. Since $1$ appears in each $B_{0}$ twice and there are $m-1$ $B_{0}$ in $D$, the number of $1$s in all $B_{0}s$ is $2(m-1)$. Since $1$ appears in each $B_{1}$ thrice and there are $m-2$ $B_{1}s$ in $D$, the total number of $1$s in $B_{1}s$ is $3(m-2)$. Since $1$ appears in each $B_{m-2}$ thrice and there is only one $B_{m-2}$ in $D$, the number of $1$s in $B_{m-2}$ is $3$. Hence $c_{1}=2(m-1)+3(m-2)+3=5(m-1)$.\\

\noindent $c_{2}$: The entry $2$ appears in $B_{0}$, $B_{1}$, $B_{2}$, $B_{m-3}$, and $B_{m-2}$. Since $2$ appears in each $B_{0}$ once and there are $m-1$ $B_{0}s$ in $D$, the number of $2$s in all $B_{0}s$ is $m-1$. Since $2$ appears in each $B_{1}$ four times and there are $m-2$ $B_{1}s$ in $D$, the number of $2$s in all $B_{1}s$ is $4(m-2)$. Since $2$ appears in each $B_{2}$ thrice and there are $m-3$ $B_{2}s$ in $D$, the number of $2$s in all $B_{2}s$ is $3(m-3)$. Since $2$ appears in each $B_{m-3}$ thrice and there are $2$ $B_{m-3}s$ in $D$, the number of $2$s in all $B_{m-3}s$ is $6$. Since $2$ appears in each $B_{m-2}$ four times and there is $1$ $B_{m-2}s$ in $D$, the number of $2$s in  $B_{m-2}s$ is $4$. Hence, $c_{2}=(m-1)+4(m-2)+3(m-3)+2(3)+4=8(m-1)$.\\

\noindent $c_{\frac{m}{2}}$: The entry $\frac{m}{2}$ appears $2$ times in $B_{\frac{m-4}{2}}$,  $2$ times in $B_{\frac{m+2}{2}}$, $6$ times in $B_{\frac{m-2}{2}}$,  and $6$ times in $B_{\frac{m}{2}}$. Since the matrices $B_{\frac{m-4}{2}}$, $B_{\frac{m+2}{2}}$, $B_{\frac{m-2}{2}}$,  and $B_{\frac{m}{2}}$ appear respectively   $\frac{m+2}{2}$, $\frac{m-4}{2}$, $\frac{m}{2}$, and  $\frac{m-2}{2}$ times in $D$, we have $c_{\frac{m}{2}}=2(\frac{m+2}{2})+2(\frac{m-4}{2})+6(\frac{m}{2})+6(\frac{m-2}{2})=8(m-1)$.\\

\noindent $c_{k}, 3\leq k\leq \frac{m-2}{2}$: The entry $k$ appears $2$ times in $B_{k-1}$, $2$ times in $B_{m-k+2}$, $4$ times in $B_{k}$, $4$ times in $B_{m-k+1}$, $3$ times in $B_{k+1}$,  and $3$ times in $B_{m-k}$. The matrices $B_{k-1}$, $B_{m-k+2}$, $B_{k}$,  $B_{m-k+1}$, $B_{k+1}$, and $B_{m-k}$ appear respectively $m-k+1$, $k-2$, $m-k$, $k-1$, $m-k-1$, and $k$ times in $D$. Hence, $c_{k}=2(m-k+1)+2(k-2)+4(m-k)+4(k-1)+3(m-k-1)+3k=9(m-1)$, and we are done.
\end{proof}
%%---------------------------------------------------------------------------------------------
For better understanding, let us have a look at the example:
\begin{ex}
Consider $M_{10,3}$: 
\begin{center}
\begin{minipage}{12cm}
\centering  \epsfig{figure=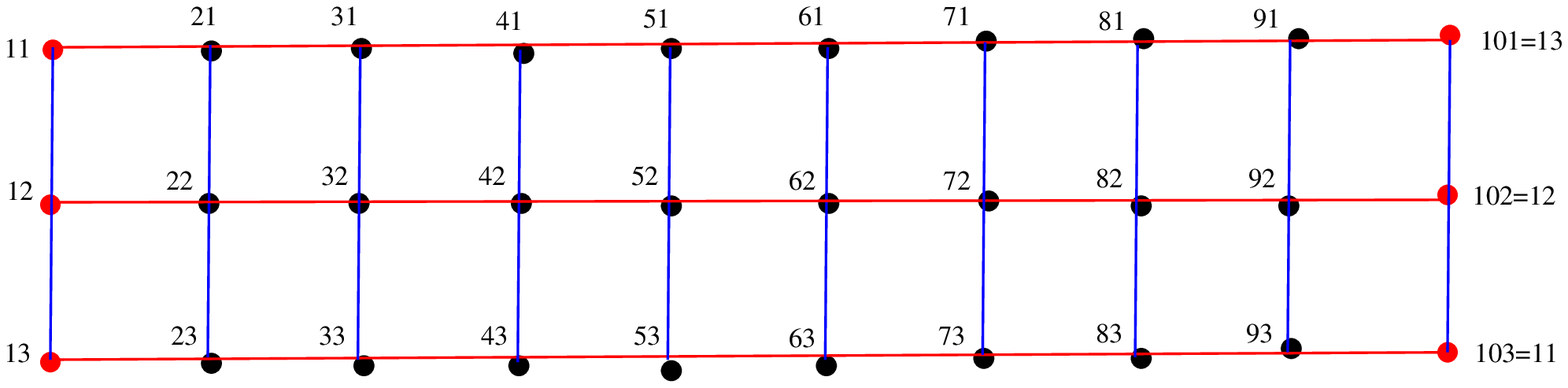, height=2.0cm}
\par \tiny {Figure 5: The grid form of $M_{10,3}$ }
\end{minipage}
\end{center}
\begin{minipage}[]{4cm}
The block matrices are: $$B_{0}=\left(
  \begin{array}{ccc}
    0 & 1 & 2 \\
    . & 0 & 1 \\
    . & . & 0 \\
  \end{array}
\right)$$
\end{minipage}
\begin{minipage}[]{4cm}
$$B_{1}=\left(
  \begin{array}{ccc}
    1 & 2 & 3 \\
    2 & 1 & 2 \\
    3 & 2 & 1 \\
  \end{array}
\right)$$
\end{minipage}
\begin{minipage}[]{4cm}
$$B_{2}=\left(
  \begin{array}{ccc}
    2 & 3 & 4 \\
    3 & 2 & 3 \\
    4 & 3 & 2 \\
  \end{array}
\right)$$
\end{minipage}
\begin{minipage}[]{4cm}
$$B_{3}=\left(
  \begin{array}{ccc}
    3 & 4 & 5 \\
    4 & 3 & 4 \\
    5 & 4 & 3 \\
  \end{array}
\right)$$
\end{minipage}
\begin{minipage}[]{4cm}
$$B_{4}=\left(
  \begin{array}{ccc}
    4 & 5 & 5 \\
    5 & 4 & 5 \\
    5 & 5 & 4 \\
  \end{array}
\right)$$
\end{minipage}
\begin{minipage}[]{4cm}
$$B_{5}=\left(
  \begin{array}{ccc}
    5 & 5 & 4 \\
    5 & 4 & 5 \\
    4 & 5 & 5 \\
  \end{array}
\right)$$
\end{minipage}
\begin{minipage}[]{4cm}
$$B_{6}=\left(
  \begin{array}{ccc}
    5 & 4 & 3 \\
    4 & 3 & 4 \\
    3 & 4 & 5 \\
  \end{array}
\right)$$
\end{minipage}
\begin{minipage}[]{4cm}
$$B_{7}=\left(
  \begin{array}{ccc}
    4 & 3 & 2 \\
    3 & 2 & 3 \\
    2 & 3 & 4 \\
  \end{array}
\right)$$
\end{minipage}
\begin{minipage}[]{4cm}
$$B_{8}=\left(
  \begin{array}{ccc}
    3 & 2 & 1 \\
    2 & 1 & 2 \\
    1 & 2 & 3 \\
  \end{array}
\right)$$
\end{minipage}\\

\noindent The distance matrix $D$ in the form of these block matrices is 

$$D=\left(
  \begin{array}{ccccccccc}
    B_{0} & B_{1} & B_{2} & B_{3} & B_{4} & B_{5} & B_{6} & B_{7} & B_{8} \\
    . & B_{0} & B_{1} & B_{2} & B_{3} & B_{4} & B_{5} & B_{6} & B_{7}\\
    . & . & B_{0} & B_{1} & B_{2} & B_{3} & B_{4} & B_{5} & B_{6} \\
    . & . & . & B_{0} & B_{1} & B_{2} & B_{3} & B_{4} & B_{5}\\
    . & . & . & . & B_{0} & B_{1} & B_{2} & B_{3} & B_{4}\\
    . & . & . & . & . & B_{0} & B_{1} & B_{2} & B_{3}\\
    . & . & . & . & . & . & B_{0} & B_{1} & B_{2}\\
    . & . & . & . & . & . & . & B_{0} & B_{1}\\
    . & . & . & . & . & . & . & . & B_{0} \\
  \end{array}
\right).$$

\noindent It is now evident from $D$ that
\begin{eqnarray*}
% \nonumber to remove numbering (before each equation)
  H(M_{10,3}) &=& c_{1}x^{1}+c_{2}x^{2}+c_{3}x^{3}+c_{4}x^{4}+c_{5}x^{5} \\
   &=& 5(m-1)x^{1}+8(m-1)x^{2}+9(m-1)x^{3}+9(m-1)x^{4}+8(m-1)x^{5}\\
   &=& 45x^{1}+72x^{2}+81x^{3}+81x^{4}+72x^{5}.
\end{eqnarray*}
\end{ex}
%%-----------------------------------------------------------------------------------------------------------
For $m=4$ we receive the special case:
\begin{prop}
$H(M_{4,3})=15x+21x^2$.
\end{prop}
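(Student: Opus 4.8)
The plan is to compute $M_{4,3}$ directly from the definition rather than to specialize Theorem~\ref{thm3.1}, because the block-matrix description underlying that theorem degenerates at $m=4$: the interior blocks $B_q$ with $1\leq q\leq\frac{m-4}{2}$ disappear, and the special blocks $B_{\frac{m-2}{2}}$, $B_{\frac{m}{2}}$ overlap with $B_0,B_1$, so the general value $c_{m/2}=8(m-1)=24$ is no longer correct. First I would realize the graph explicitly. Performing the $180^{\circ}$ twist identification with $m=4$, $n=3$ collapses the $12$ vertices of $P_4\times P_3$ to the $3(m-1)=9$ distinct vertices, which I would label by the three surviving columns $a_1,a_2,a_3$ (from $u_1$), $b_1,b_2,b_3$ (from $u_2$), and $c_1,c_2,c_3$ (from $u_3$). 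Reading off the edges of the Cartesian product, together with the twisted edges coming from the identification $(u_4,v_j)=(u_1,v_{4-j})$ (which attaches column $u_3$ to column $u_1$ with a flip, giving $c_1\!\sim\! a_3$, $c_2\!\sim\! a_2$, $c_3\!\sim\! a_1$), produces a list of exactly $15$ edges.

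Since distance-$1$ pairs are precisely the edges, this immediately yields $c_1=15$. It then remains to determine $c_2$ and to confirm that no larger distance occurs, i.e.\ that $M_{4,3}$ has diameter $2$. I would verify this by checking that every non-adjacent pair of vertices has a common neighbour. The symmetry among the three columns, together with the reflection $v_1\leftrightarrow v_3$ inside each column, cuts the verification down to a few representative pairs (for instance, one checks that $a_1$ reaches each of its five non-neighbours $a_3,b_2,b_3,c_1,c_2$ through a single intermediate vertex, and the remaining cases follow by symmetry). Equivalently, one may simply write out the $9\times 9$ distance matrix and observe that every off-diagonal entry equals $1$ or $2$.

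Once diameter $2$ is established, the count is automatic: the $9$ vertices form $\frac{9\cdot 8}{2}=36$ unordered pairs, of which $15$ lie at distance $1$, so the remaining $36-15=21$ lie at distance $2$. Hence $c_2=21$ and $H(M_{4,3})=15x+21x^2$. The only genuine work is the diameter-$2$ verification, which I expect to be the main obstacle; it is nonetheless a small finite check, and the column symmetry keeps the number of essentially distinct cases very low.
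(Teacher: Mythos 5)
Your proof is correct and amounts to the same direct verification the paper performs: the paper simply writes out the $9\times 9$ distance matrix of $M_{4,3}$ and counts the entries equal to $1$ and $2$, whereas you count the $15$ edges to get $c_1$, establish diameter $2$, and obtain $c_2=\binom{9}{2}-15=21$ by complementation. Your organization is slightly more economical (and your observation that Theorem~\ref{thm3.1} degenerates at $m=4$ correctly explains why a separate proposition is needed), but the substance of the argument is the same finite check.
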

\begin{proof}
\noindent Since the distance matrix $D$ corresponding to $M_{4,3}$ is symmetric, we give only its upper-triangular part:
$$D=\left(
      \begin{array}{ccccccccc}
        0 & 1 & 2 & 1 & 2 & 2 & 2 & 2 & 1 \\
         & 0 & 1 & 2 & 1 & 2 & 2 & 1 & 2 \\
         &  & 0 & 2 & 2 & 1 & 1 & 2 & 2 \\
         &  &  & 0 & 1 & 2 & 1 & 2 & 2 \\
         &  &  &  & 0 & 1 & 2 & 1 & 2 \\
         &  &  &  &  & 0 & 2 & 2 & 1 \\
         &  &  &  &  &  & 0 & 1 & 2 \\
         &  &  &  &  &  &  & 0 & 1 \\
         &  &  &  &  &  &  &  & 0 \\
      \end{array}
    \right)$$
\noindent It immediately follows that $c_{1}=15$ and $c_{2}=21$.
\end{proof}
For odd $m$ we have the result:
\begin{thm}\label{thm3.2}
 The Hosoya polynomial of $M_{m,3}$, when $m\geq 7$ is odd, is $H(M_{m,3})=\sum_{k=1}^{\frac{m+1}{2}}c_{k}x^{k}$, where
  $$ c_{k}=\left\{
                  \begin{array}{ll}
                    5(m-1), &  k=1  \\
                    8(m-1), & k=2\\
                    9(m-1), &  2<k< \frac{m-1}{2}\\
                    17(\frac{m-1}{2}),& k=\frac{m-1}{2}\\
                    4(m-1), & k=\frac{m+1}{2}.
                  \end{array}
                \right.$$
 \end{thm}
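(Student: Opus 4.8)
The plan is to mirror the block-matrix argument of Theorem~\ref{thm3.1}, changing only the treatment of the central secondary diagonals to account for the odd parity of $m$. As before, I would encode the strict upper-triangular part of the $3(m-1)\times 3(m-1)$ distance matrix $D$ of $M_{m,3}$ by $3\times 3$ blocks $B_q$, where $B_q$ sits on the $q$th secondary diagonal and therefore occurs $m-q-1$ times in $D$ (all nine of its entries lying above the main diagonal once $q\geq 1$). The corner block $B_0$ and the generic interior blocks $B_q$ for small $q$, together with their column-reversed reflections produced by the $180^{\circ}$ twist, are insensitive to the parity of $m$, since shortest paths between vertices on the same side of the ladder never use the M\"{o}bius identification. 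Consequently the tallies that produced $c_1=5(m-1)$, $c_2=8(m-1)$, and the generic value $9(m-1)$ in Theorem~\ref{thm3.1} carry over verbatim to the range $2<k<\frac{m-1}{2}$: each such $k$ occurs in the same blocks $B_{k-1},B_k,B_{k+1}$ and their reflections $B_{m-k},B_{m-k+1},B_{m-k+2}$, with the same per-block multiplicities weighted by the same block counts $m-q-1$.

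The essential new feature is that for odd $m$ the sequence of secondary diagonals has a single central block $B_{\frac{m-1}{2}}$, flanked by the last ascending block $B_{\frac{m-3}{2}}$ and the first descending block $B_{\frac{m+1}{2}}$, in contrast to the two transitional blocks $B_{\frac{m-2}{2}},B_{\frac{m}{2}}$ of the even case. The first step, therefore, is to compute these three central blocks explicitly from the geometry of $M_{m,3}$, recording that the central block occurs $\frac{m-1}{2}$ times while its neighbours occur $\frac{m+1}{2}$ and $\frac{m-3}{2}$ times respectively. I would check in particular that the diameter $\frac{m+1}{2}$ is realised only within this central region, and that the penultimate distance $\frac{m-1}{2}$ appears both there and in the last of the generic blocks.

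The two nonstandard coefficients are then read off by the same weighted count. For $c_{\frac{m+1}{2}}$ I would tally the diameter entries, which are confined to the central block and its two neighbours; multiplying each per-block multiplicity by the corresponding count $m-q-1$ is expected to collapse to $4(m-1)$. For $c_{\frac{m-1}{2}}$ I would combine the contributions of $\frac{m-1}{2}$ coming from $B_{\frac{m-3}{2}}$ and $B_{\frac{m+1}{2}}$ with the extra copies inside $B_{\frac{m-1}{2}}$; the asymmetry of the single central block is exactly what breaks the clean proportionality to $m-1$ and yields $17\cdot\frac{m-1}{2}$. As an independent check I would verify that the full coefficient sum equals $\binom{3(m-1)}{2}=\frac{3(m-1)(3m-4)}{2}$, which indeed holds for the stated coefficients.

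The main obstacle is the bookkeeping at the single central diagonal. Because the twist fuses the ascending and descending distance patterns through one block rather than two, the value $\frac{m-1}{2}$ is distributed across $B_{\frac{m-3}{2}}$, $B_{\frac{m-1}{2}}$, and $B_{\frac{m+1}{2}}$ unevenly, and one must be careful neither to double-count entries at the ascending/descending seam nor to misread the multiplicity $\frac{m-1}{2}$ of the central block. Writing out $D$ in full for the smallest cases $m=7$ and $m=9$, exactly as was done for $M_{10,3}$ in the worked example, is the cleanest way to fix the central-block entries and confirm that the two exceptional coefficients emerge as claimed.
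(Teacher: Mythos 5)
Your plan is, in substance, the paper's own proof: the same block decomposition of the upper triangle of the $3(m-1)\times 3(m-1)$ distance matrix into blocks $B_q$ occurring $m-q-1$ times on the $q$th secondary diagonal, the same carry-over of $c_1$, $c_2$ and the generic $9(m-1)$ from Theorem~\ref{thm3.1}, and a weighted tally over the central blocks for the two exceptional coefficients. Your treatment of $c_{\frac{m+1}{2}}$ matches the paper exactly: the diameter $\frac{m+1}{2}$ occurs $4$ times in $B_{\frac{m-1}{2}}$ and twice in each of $B_{\frac{m-3}{2}}$ and $B_{\frac{m+1}{2}}$, and the weighted sum $4\cdot\frac{m-1}{2}+2\cdot\frac{m+1}{2}+2\cdot\frac{m-3}{2}$ collapses to $4(m-1)$.

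There is, however, one concrete error in your bookkeeping for $c_{\frac{m-1}{2}}$: you confine the value $\frac{m-1}{2}$ to the three blocks $B_{\frac{m-3}{2}}$, $B_{\frac{m-1}{2}}$, $B_{\frac{m+1}{2}}$, but it also occurs twice, as the corner entries, in each of $B_{\frac{m-5}{2}}$ and $B_{\frac{m+3}{2}}$ --- exactly as a generic value $k$ is spread over \emph{three} consecutive blocks on each side of the seam, not one. The paper's tally is: $5$ occurrences in $B_{\frac{m-1}{2}}$, $4$ in each of $B_{\frac{m-3}{2}}$ and $B_{\frac{m+1}{2}}$, and $2$ in each of $B_{\frac{m-5}{2}}$ and $B_{\frac{m+3}{2}}$, with block multiplicities $\frac{m-1}{2}$, $\frac{m+1}{2}$, $\frac{m-3}{2}$, $\frac{m+3}{2}$, $\frac{m-5}{2}$ respectively. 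The three blocks you name contribute only $4\cdot\frac{m+1}{2}+5\cdot\frac{m-1}{2}+4\cdot\frac{m-3}{2}=\frac{13}{2}(m-1)$, which falls short of $\frac{17}{2}(m-1)$ by precisely the omitted $2\cdot\frac{m+3}{2}+2\cdot\frac{m-5}{2}=2(m-1)$. Your own safeguards --- the check that the coefficients sum to the number of vertex pairs $\frac{3(m-1)(3m-4)}{2}$, and the explicit matrices for $m=7,9$ --- would expose the discrepancy, but as written the tally for this coefficient would not reach the claimed value.
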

\begin{proof}
The block matrices, which were explained in Theorem~\ref{thm3.1}, of the distance matric of $M_{m,3}$, when $m\geq 7$ is odd, are:   \noindent $B_{0}=\left(
  \begin{array}{ccc}
    0 & 1 & 2 \\
    . & 0 & 1 \\
    . & . & 0 \\
  \end{array}
\right),$
$B_{q}=\left(
  \begin{array}{ccc}
    q-1 & q & q+1 \\
    q & q-1 & q \\
    q+1 & q & q-1 \\
  \end{array}
\right)$ for $q=1,2,\dots,\frac{m-3}{2},$
$B_{\frac{m-1}{2}}=\left(
  \begin{array}{ccc}
    \frac{m-3}{2} & \frac{m-1}{2} & \frac{m-3}{2} \\
    \frac{m-1}{2} & \frac{m-3}{2} & \frac{m-1}{2} \\
    \frac{m-3}{2} & \frac{m-1}{2} & \frac{m-3}{2} \\
  \end{array}
\right),$
and
$B_{q}=\left(
  \begin{array}{ccc}
    m-q+2 & m-q+1 & m-q \\
    m-q+1 & m-q & m-q+1 \\
    m-q & m-q+1 & m-q+2 \\
  \end{array}
\right)$ for $q=\frac{m+1}{2},\frac{m+3}{2},\ldots,m-1.$\\

The proofs $c_{1}$, $c_{2}$, and $c_{k}, 2<k< \frac{m-1}{2}$, are similar to the proofs given in Theorem~\ref{thm3.1}. %In order to find the $k$th coefficient of the Hosoya polynomial we first give the upper triangular entries of the $3(m-1) \times 3(m-1)$ distance matrix $D=(d_{ij})$ corresponding to $M_{m,3}$ and then find the sum of all the entries $d_{ij}=k$ to find $c_{k}$; note that all the entries $d_{ij}$ satisfy the condition $j>i$.\\

\noindent We need to find $c_{\frac{m-1}{2}}$ and $c_{\frac{m+1}{2}}$. The entry $\frac{m-1}{2}$ appears $5$ times in $B_{\frac{m-1}{2}}$, $4$ times in $B_{\frac{m+1}{2}}$, $4$ times in $B_{\frac{m-3}{2}}$, $2$ times in $B_{\frac{m-5}{2}}$, and $2$ times in $B_{\frac{m+3}{2}}$. Since $B_{\frac{m-1}{2}}$, $B_{\frac{m+1}{2}}$,  $B_{\frac{m-3}{2}}$, $B_{\frac{m-5}{2}}$, and $B_{\frac{m+3}{2}}$ appear respectively $\frac{m-1}{2}$, $\frac{m-3}{2}$, $\frac{m+1}{2}$, $\frac{m+3}{2}$, and $\frac{m-5}{2}$. Hence $c_{\frac{m-1}{2}}=5(\frac{m-1}{2})+4(\frac{m-3}{2})+4(\frac{m+1}{2})+2(\frac{m+3}{2})+2(\frac{m-5}{2})=\frac{17}{2}(m-1)$.

\noindent Finally, we go for $c_{\frac{m+1}{2}}$. The entry $\frac{m+1}{2}$ appears $4$ times in $B_{\frac{m-1}{2}}$,  $2$ times in $B_{\frac{m+1}{2}}$, and $2$ times in $B_{\frac{m-3}{2}}$. Since $B_{\frac{m-1}{2}}$,  $B_{\frac{m+1}{2}}$, and $B_{\frac{m-3}{2}}$ repeat respectively $\frac{m-1}{2}$, $\frac{m-3}{2}$, and $\frac{m+1}{2}$ times in $D$. Therefore, $c_{\frac{m+1}{2}}=4(\frac{m-1}{2})+2(\frac{m-3}{2})+2(\frac{m+1}{2})=4(m-1)$. This completes the proof.
\end{proof}
Here, again, we receive a special case:
\begin{prop}
$H(M_{5,3},x)=20x+30x^2+16x^3$.
\end{prop}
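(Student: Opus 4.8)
\noindent The plan is to read off the three coefficients from the distances of $M_{5,3}$ computed directly, in the spirit of the proof given above for $M_{4,3}$, rather than by specializing Theorem~\ref{thm3.2}. The point is that $m=5$ lies below the hypothesis $m\geq 7$ of that theorem and is genuinely exceptional. Its block array is only $4\times 4$, so the secondary diagonals run over $q=0,1,2,3$ alone, whereas the generic counts for $c_{\frac{m-1}{2}}$ and $c_{\frac{m+1}{2}}$ in Theorem~\ref{thm3.2} invoke blocks such as $B_{\frac{m+3}{2}}$ and $B_{\frac{m-5}{2}}$ that do not occur for $m=5$. Worse, the middle distance $\frac{m-1}{2}$ equals $2$ when $m=5$, so the regimes $k=2$ and $k=\frac{m-1}{2}$ collide: plugging $m=5$ into the two relevant formulas yields the conflicting values $8(m-1)=32$ and $\frac{17}{2}(m-1)=34$, and neither is the correct coefficient $30$.

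\noindent First I would fix coordinates for the twelve vertices, writing the four surviving columns as triples $a,b,c,d$ (the fifth column of $P_5\times P_3$ being identified with the first in reversed order). The $180^\circ$ twist then contributes the three crossing edges $d_1a_3$, $d_2a_2$, $d_3a_1$ on top of the ordinary grid edges. A short inspection shows that $M_{5,3}$ has exactly two vertex orbits: the eight rim vertices in rows $1$ and $3$ (interchanged by the twist), each of degree $3$, and the four central vertices in row $2$, each of degree $4$. It is therefore enough to run a breadth-first search from one representative of each orbit; I expect a rim vertex to realize the distance distribution $(3,5,3)$ and a central vertex the distribution $(4,5,2)$ across distances $(1,2,3)$, with diameter $3=\frac{m+1}{2}$.

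\noindent The coefficients are then assembled from $c_k=\frac{1}{2}\sum_{v}|\{u:d(u,v)=k\}|$, giving $c_1=\frac{1}{2}(8\cdot 3+4\cdot 4)=20$, $c_2=\frac{1}{2}(8\cdot 5+4\cdot 5)=30$, and $c_3=\frac{1}{2}(8\cdot 3+4\cdot 2)=16$, whence $H(M_{5,3},x)=20x+30x^2+16x^3$. Two independent sanity checks pin the answer down: $c_1$ must equal the edge count, and $P_5\times P_3$ has $22$ edges of which the two edges of the first column are identified with those of the last, leaving $20$; and $c_1+c_2+c_3$ must equal $\binom{12}{2}=66$, which it does.

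\noindent The one delicate point, and the main obstacle, is the non-planar identification itself. Because of the twist, a shortest path between two rim vertices may pass through the seam edges $d_1a_3,d_2a_2,d_3a_1$ and so be strictly shorter than any path confined to the $P_5\times P_3$ grid; it is exactly these wrap-around routes that pull the diameter down to $3$ and that make $c_2$ and $c_3$ deviate from their naive grid values. The substance of the proof is therefore to write out the full adjacency list and confirm that the breadth-first search captures these shortcuts correctly. Once that list is in place the search is mechanical and the three coefficients drop out at once.
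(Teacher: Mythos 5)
Your proposal is correct and amounts to the same direct distance computation the paper performs: the paper simply tabulates the full $12\times 12$ distance matrix and reads off $c_1=20$, $c_2=30$, $c_3=16$, while you organize the identical data by the two vertex orbits (rim distribution $(3,5,3)$ and central distribution $(4,5,2)$, both of which agree with the rows of the paper's matrix) and halve the sum. Your added checks --- that $c_1$ equals the edge count $20$ and that $c_1+c_2+c_3=66$ is the total number of vertex pairs --- and your observation that $m=5$ genuinely falls outside the hypotheses of Theorem~\ref{thm3.2} are both sound.
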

\begin{proof}
\noindent Here $D$ is:
$$D=\left(
      \begin{array}{cccccccccccc}
        0 & 1 & 2 & 1 & 2 & 3 & 2 & 3 & 2 & 3 & 2 & 1 \\
         & 0 & 1 & 2 & 1 & 2 & 3 & 2 & 3 & 2 & 1 & 2 \\
         &  & 0 & 3 & 2 & 1 & 2 & 3 & 2 & 1 & 2 & 2 \\
         &  &  & 0 & 1 & 2 & 1 & 2 & 3 & 2 & 3 & 2\\
         &  &  &  & 0 & 1 & 2 & 1 & 2 & 3 & 2 & 3 \\
         &  &  &  &  & 0 & 3 & 2 & 1 & 2 & 3 & 2 \\
         &  &  &  &  &  & 0 & 1 & 2 & 1 & 2 & 3 \\
         &  &  &  &  &  &  & 0 & 1 & 2 & 1 & 2 \\
         &  &  &  &  &  &  &  & 0 & 3 & 2 & 1 \\
         &  &  &  &  &  &  &  &  & 0 & 1 & 2 \\
         &  &  &  &  &  &  &  &  &  & 0 & 1 \\
         &  &  &  &  &  &  &  &  &  &  & 0 \\
      \end{array}
    \right)$$
\noindent It is now evident that $c_{1}=20$, $c_{2}=30$, and $c_{3}=16$.
\end{proof}
 %%--------------------------------------------------------------------------------------------
\section{Topological Indices}\label{sec4}
In this section we give the distance-based topological indices, Wiener, hyper Wiener, Harary, and  Tratch-Stankevitch-Zefirovof, of $M_{m,3}$ for both even and odd $m$.
\begin{prop}\label{prop4.1}
For even $m$ we get
\begin{enumerate}
  \item $W(M_{m,3})=\frac{1}{8}[9m^3+5m^2-62m+48]$
  \item $WW(M_{m,3})=\frac{3}{16}m^4+\frac{13}{16}m^3+\frac{1}{4}m^2-\frac{33}{4}m+7$
  \item $H_{a}(M_{m,3})=9m+7-\frac{16}{m}++9(m-1)\sum_{i=3}^{\frac{m-2}{2}}\frac{1}{i}$
  \item $TSZ(M_{m,3})=\frac{1}{16}m^4+\frac{43}{24}m^2+\frac{31}{48}m^3+\frac{19}{3}-\frac{53}{6}m$
\end{enumerate}
\end{prop}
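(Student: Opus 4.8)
The plan is to obtain all four indices directly from the Hosoya polynomial of Theorem~\ref{thm3.1} by applying the four transforms recorded in Section~\ref{sec2}. Writing $H(M_{m,3},x)=\sum_{k=1}^{m/2}c_{k}x^{k}$, the three differential identities collapse into weighted power sums of the coefficients,
$$W=\sum_{k}kc_{k},\qquad WW=\frac{1}{2}\sum_{k}k(k+1)c_{k},\qquad TSZ=\frac{1}{6}\sum_{k}k(k+1)(k+2)c_{k},$$
while term-by-term integration gives the reciprocal sum $Ha=\int_{0}^{1}x^{-1}H\,dx=\sum_{k}c_{k}/k$. Thus every index reduces to evaluating $\sum_{k}f(k)c_{k}$ for the weights $f(k)=k,\ k(k+1),\ k(k+1)(k+2)$ and, for the Harary index, $f(k)=1/k$.

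The key simplification is that $c_{k}$ equals the constant bulk value $9(m-1)$ for every $k$ except at the three boundary indices $k=1,2,\frac{m}{2}$, where it falls short of the bulk by $4(m-1),\ (m-1),\ (m-1)$ respectively (here I use $m\ge 6$, so that $\frac{m}{2}\ge 3$ is distinct from $1$ and $2$). Hence, for any weight $f$,
$$\sum_{k=1}^{m/2}f(k)c_{k}=9(m-1)\sum_{k=1}^{m/2}f(k)-(m-1)\Big[4f(1)+f(2)+f\big(\frac{m}{2}\big)\Big],$$
which reduces each moment to one Faulhaber power sum $\sum_{k=1}^{N}k^{j}$ with $N=\frac{m}{2}$ plus three explicit boundary terms.

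I would then perform the substitutions one index at a time. For $W$, using $\sum_{k=1}^{N}k=\frac{N(N+1)}{2}$ with $N=\frac{m}{2}$ and collecting powers of $m$ gives formula~(1); $WW$ additionally needs $\sum_{k=1}^{N}k^{2}=\frac{N(N+1)(2N+1)}{6}$, and $TSZ$ the cubic sum $\sum_{k=1}^{N}k^{3}=\big(\frac{N(N+1)}{2}\big)^{2}$. The Harary index does not fit this polynomial pattern: integrating termwise and peeling off the three boundary terms yields $Ha=5(m-1)+4(m-1)+\frac{8(m-1)}{m/2}+9(m-1)\sum_{i=3}^{(m-2)/2}\frac{1}{i}$, and simplifying the first three summands to $9m+7-\frac{16}{m}$ produces formula~(3), the harmonic tail remaining irreducible.

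The main obstacle is the algebra for $TSZ$: since the weight $k(k+1)(k+2)$ is cubic, the bulk term $9(m-1)\sum_{k=1}^{N}k(k+1)(k+2)$ already carries the full quartic power sum, so the reduction produces a degree-five expression in $m$ that must be expanded and combined with the boundary correction $f(\frac{m}{2})=\frac{m}{2}\big(\frac{m}{2}+1\big)\big(\frac{m}{2}+2\big)$; this is the step most prone to arithmetic slips and the one against which the stated closed form should be checked most carefully. A convenient global sanity check is the case $m=10$: one verifies $W=1116$ and $WW=2637$ both from the explicit polynomial $45x+72x^{2}+81x^{3}+81x^{4}+72x^{5}$ and from formulas~(1) and~(2).
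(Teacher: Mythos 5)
Your plan is essentially the paper's own proof: both apply the four transforms from Section~\ref{sec2} term by term to the Hosoya polynomial of Theorem~\ref{thm3.1}. Your only real departure is bookkeeping --- you write each moment as the full bulk sum $9(m-1)\sum_{k=1}^{m/2}f(k)$ minus the three boundary deficits $(m-1)\big[4f(1)+f(2)+f(\tfrac{m}{2})\big]$, whereas the paper keeps the exceptional terms explicit and sums $9(m-1)x^{i}$ over $3\le i\le\frac{m-2}{2}$. Your identities $W=\sum_k kc_k$, $WW=\tfrac12\sum_k k(k+1)c_k$, $TSZ=\tfrac16\sum_k k(k+1)(k+2)c_k$, $Ha=\sum_k c_k/k$ are correct, your deficit values are correct, and your $m=10$ checks $W=1116$, $WW=2637$ are right; carried out, your reduction reproduces items (1)--(3) exactly (the ``$++$'' in (3) is only a typo).

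The step you flagged as ``most prone to slips'' is, however, where the proposition itself fails, and your degree count already proves it. Since $\sum_{k=1}^{N}k(k+1)(k+2)=\tfrac14N(N+1)(N+2)(N+3)$ is quartic in $N=\tfrac{m}{2}$, the bulk term of $TSZ$ is quintic in $m$, so no degree-four polynomial can equal $TSZ(M_{m,3})$. Completing your reduction gives
$$TSZ(M_{m,3})=\frac{1}{384}\left(9m^{5}+91m^{4}+248m^{3}+20m^{2}-3440m+3072\right),$$
which yields $5283$ at $m=10$ and $2009$ at $m=8$, in agreement with $\tfrac16\sum_k k(k+1)(k+2)c_k$ computed directly from the explicit polynomials, whereas the displayed formula (4) gives $1368$ at $m=10$. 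The source of the error in the paper's own computation is the contribution of the term $8(m-1)x^{\frac{m}{2}+2}$: its third derivative at $x=1$ is $(m-1)m(m+2)(m+4)$, but it appears in the proof as $2(m+2)(m+4)(m-1)$, misplaced outside the $\tfrac{1}{3!}$ bracket and missing the factor $m$ that forces degree five. So your method is the right one, but you should not expect it to confirm item (4) as printed; the honest conclusion of your computation is that (4) must be corrected to the quintic above.
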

\begin{proof} We prove these relations one by one using Theorem~\ref{thm3.1} and the relations given in Introduction section:
\begin{enumerate}
 \item \begin{eqnarray*}
  % \nonumber to remove numbering (before each equation)
    W(M_{m,3}) &=& \frac{d}{dx}[H(M_{m,3},x)]_{x=1} \\
     &=& \frac{d}{dx}[5(m-1)x+8(m-1)x^2+8(m-1)x^{\frac{m}{2}}\\
     &&  +\sum_{i=3}^{\frac{m-2}{2}}9(m-1)x^{i}]_{x=1} \\
     &=& 21m-21+4m(m-1)+9(m-1)\sum_{i=3}^{\frac{m-2}{2}}i\\
     &=& \frac{1}{8}[9m^3+5m^2-62m+48]
  \end{eqnarray*}

\item \begin{eqnarray*}
  % \nonumber to remove numbering (before each equation)
    WW(M_{m,3})&=&\frac{1}{2}\frac{d^2}{dx^2}[xH(M_{m,3},x)]_{x=1}\\
     &=& \frac{1}{2}\frac{d^2}{dx^2}[5(m-1)x^2+8(m-1)x^3+8(m-1)x^{(\frac{m}{2}}+1)\\
     &&+\sum_{i=3}^{\frac{m-2}{2}}9(m-1)x^{i+1}]_{x=1}\\
     &=& \frac{1}{2}[10(m-1)+48(m-1)+4m(m-1)(m+2)\\
     &&+9(m-1)\sum_{i=3}^{\frac{m-2}{2}}i{i+1}] \\
     &=& \frac{3}{16}m^4+\frac{13}{16}m^3+\frac{1}{4}m^2-\frac{33}{4}m+7
   \end{eqnarray*}

   \item \begin{eqnarray*}
  % \nonumber to remove numbering (before each equation)
    H_{a}(M_{m,3})&=&\int_{0}^{1}\frac{H(M_{m,3},x)}{x}dx\\
     &=& \int_{0}^{1}\frac{5(m-1)x+8(m-1)x^2+8(m-1)x^{\frac{m}{2}}+\sum_{i=3}^{\frac{m-2}{2}}9(m-1)x^{i}}{x}dx\\
     &=& 9m+7-\frac{16}{m}+9(m-1)\sum_{i=3}^{\frac{m-2}{2}}\frac{1}{i}
   \end{eqnarray*}

\item \begin{eqnarray*}
TSZ(M_{m,3})&=&\frac{1}{3!}\frac{d^3}{dx^3}[x^2H(M_{m,3})]_{x=1}\\
     &=& \frac{1}{3!}\frac{d^3}{dx^3}[5(m-1)x^3+8(m-1)x^4+\sum_{i=3}^{\frac{m-2}{2}}9(m-1)x^{i+2}\\
     && +8(m-1)x^{\frac{m}{2}+2}]_{x=1}\\
     &=& \frac{1}{3!}[30(m-1)+96(m-1)\\
     && +\sum_{i=3}^{\frac{m-2}{2}}i(i+1)(i+2)9(m-1)]+2(m+2)(m+4)(m-1)\\
     &=& \frac{1}{16}m^4+\frac{43}{24}m^2+\frac{31}{48}m^3+\frac{19}{3}-\frac{53}{6}m
\end{eqnarray*}
\end{enumerate}
\end{proof}

\begin{prop}\label{prop4.2}
For odd $m$ we get
\begin{enumerate}
  \item $W(M_{m,3})=\frac{1}{8}[9m^3+5m^2-53m+39]$
  \item $WW(M_{m,3})=\frac{3}{16}m^4+\frac{13}{16}m^3+\frac{13}{16}m^2-\frac{125}{16}m+6$
  \item $H_{a}(M_{m,3})=\frac{m(9m+25)}{m+1}+9(m-1))\sum_{i=3}^{\frac{m-2}{2}}\frac{1}{i}$
  \item $TSZ(M_{m,3})=\frac{1}{16}m^4+\frac{31}{48}m^3+\frac{95}{48}m^2-\frac{133}{16}m+\frac{45}{8}$
\end{enumerate}
\end{prop}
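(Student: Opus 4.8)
The plan is to carry out, for odd $m$, exactly the four computations performed for even $m$ in Proposition~\ref{prop4.1}, the only input that changes being the Hosoya polynomial, which is now taken from Theorem~\ref{thm3.2} rather than Theorem~\ref{thm3.1}. First I would record that polynomial in fully expanded form, writing $N=\frac{m-3}{2}$ for the last generic exponent and isolating the two boundary terms that carry the irregular coefficients:
$$H(M_{m,3},x)=5(m-1)x+8(m-1)x^{2}+9(m-1)\sum_{i=3}^{N}x^{i}+\frac{17}{2}(m-1)x^{\frac{m-1}{2}}+4(m-1)x^{\frac{m+1}{2}}.$$
Because the bulk coefficient $9(m-1)$ is identical to the even case, only the summation range and these two boundary terms will distinguish the odd-$m$ answers from those of Proposition~\ref{prop4.1}.

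Next I would apply the four operator recipes recorded in the Introduction, each of which acts term-by-term on a monomial $c_{k}x^{k}$: the Wiener operator returns $k\,c_{k}$, the hyper-Wiener operator $\frac{1}{2}\frac{d^{2}}{dx^{2}}(xH)|_{x=1}$ returns $\frac{1}{2}k(k+1)c_{k}$, the Harary operator $\int_{0}^{1}(\cdot)/x\,dx$ returns $c_{k}/k$, and the TSZ operator $\frac{1}{3!}\frac{d^{3}}{dx^{3}}(x^{2}H)|_{x=1}$ returns $\frac{1}{6}k(k+1)(k+2)c_{k}$. The three polynomial indices then reduce to the finite-sum identities $\sum_{i=3}^{N}i=\frac{1}{2}N(N+1)-3$, $\sum_{i=3}^{N}i(i+1)=\frac{1}{3}N(N+1)(N+2)-8$, and $\sum_{i=3}^{N}i(i+1)(i+2)=\frac{1}{4}N(N+1)(N+2)(N+3)-30$, after which substituting $N=\frac{m-3}{2}$ turns each contribution into an expression in $m$. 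For the Harary index the weights $1/k$ block any such collapse, so the harmonic tail $9(m-1)\sum_{i=3}^{N}\frac{1}{i}$ survives while the four explicit terms combine; a short computation gives $5(m-1)+4(m-1)+17+\frac{8(m-1)}{m+1}=\frac{m(9m+25)}{m+1}$, which is claim~(3) once the printed upper limit $\frac{m-2}{2}$ is read as the integer $N=\frac{m-3}{2}$.

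As a check that the bookkeeping is right I would run the Wiener case first: the generic block contributes $9(m-1)[\frac{(m-3)(m-1)}{8}-3]$, the boundary terms add $\frac{17}{4}(m-1)^{2}$ and $2(m^{2}-1)$, and the first two monomials add $21(m-1)$; over a common denominator of $8$ these collapse to $\frac{1}{8}(9m^{3}+5m^{2}-53m+39)$, which is claim~(1). The hyper-Wiener case~(2) follows identically with the weight $\frac{1}{2}k(k+1)$ and reduces to $\frac{1}{16}(3m^{4}+13m^{3}+13m^{2}-125m+96)$, matching the stated form since $\frac{96}{16}=6$.

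The main obstacle is claim~(4). Here the cubic weight $k(k+1)(k+2)$, summed against the generic coefficient $9(m-1)$ over $3\le k\le\frac{m-3}{2}$, already produces a contribution of degree \emph{five} in $m$, with leading term $\frac{3}{128}m^{5}$, whereas the two boundary terms are only of degree four; the delicate point is therefore to collect these pieces and simplify without silently discarding a monomial. Accordingly, the first thing I would verify before trusting any simplification is a degree audit of the leading term of $TSZ(M_{m,3})$: if the assembled quintic is to reduce to a quartic a large cancellation must be produced, and locating that cancellation---or concluding that none occurs and correcting the closed form---is precisely where this computation either closes or must be revised. The arithmetic for (1)--(3) is otherwise routine once the finite-sum identities above are in place.
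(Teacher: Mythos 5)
Your approach coincides with the paper's: the paper proves this proposition only by the remark that the proof is ``similar to the proof of Proposition~\ref{prop4.1}'', i.e.\ by applying the four operator identities from the Introduction term-by-term to the Hosoya polynomial of Theorem~\ref{thm3.2}, which is exactly what you set up. Your derivations of (1)--(3) are correct, including the necessary reading of the upper limit $\frac{m-2}{2}$ as the integer $N=\frac{m-3}{2}$ for odd $m$ and the collapse $9(m-1)+17+\frac{8(m-1)}{m+1}=\frac{m(9m+25)}{m+1}$ in the Harary case; I checked (1) and (2) against the exact values at $m=7$ and $m=9$ and they agree.

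Your hesitation about (4) is the right instinct, but you leave the point unresolved, and it must be resolved in the negative: no cancellation occurs, the quintic term survives, and the stated degree-four closed form is simply not what the computation produces, so a proof of item (4) as printed cannot be completed. Concretely, for $m=7$ Theorem~\ref{thm3.2} gives $H(M_{7,3},x)=30x+48x^{2}+51x^{3}+24x^{4}$, whence $TSZ=\frac{1}{6}\bigl(6\cdot 30+24\cdot 48+60\cdot 51+120\cdot 24\bigr)=1212$, while the printed formula evaluates to $\frac{19968}{48}=416$; at $m=9$ the true value is $3496$ against the formula's $972$. The correct closed form is quintic with leading term $\frac{3}{128}m^{5}$, exactly as your degree audit predicts (the same defect occurs in item (4) of Proposition~\ref{prop4.1}, which already fails at $m=6$, where the true value is $585$). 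So your argument closes items (1)--(3); for item (4) the statement itself, not your proof strategy, is what needs to be corrected.
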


\begin{proof} The proof is similar to the proof of Proposition~\ref{prop4.1}.
\end{proof}


\begin{thebibliography}{10}
\bibitem{Ali:11} Ali A. A and Ahmed M. Ali. (2011). \emph{Hosoya Polynomials of Pentachains}. MATCH Commun. Math. Comput. Chem. 65: 807-819.

\bibitem{Balaban:83} Balaban A. T.,  Motoc I., Bonchev D. and Mekenyan, O.(1983). \emph{Topological Indices for Structureactivity
Correlations}. Topics in Current Chemistry, Vol. 114, pp. 21-55. Berlin Heidelberg New York Tokyo, Springer.

\bibitem{Bonchev:83} Bonchev D. (1983). \emph{Information Theoretic Indices for Characterization of Chemical Structures},
Chichester: Research Studies Press.

\bibitem{Cash:02} Cash G., Klavzar S. and Petkovsek M. (2002). \emph{Three Methods for Calculation of the Hyper-Wiener Index of Molecular Graphs}, J. Chem. Inf. Comput. Sci. 42: 571-576.

\bibitem{Diudea:96}  Diudea M. V. (1996). \emph{Wiener and Hyper-Wiener Numbers in a Single Matrix}, J. Chem. Inf. Comput. Sci. 36: 833-836.

\bibitem{Farhani:13} Farahani M. R. (2013). \emph{Hosoya, Schultz, Modified Schultz Polynomials and Their Topological Indices of Benzene Molecules: First Members of Polycyclic Aromatic Hydrocarbons (PAHs)}, International Journal of Theoretical Chemistry, Vol. 1, No. 2: 09-16.

\bibitem{Gutman:86} Gutman I, and Polansky O. E. (1986). \emph{Mathematical Concepts in Organic Chemistry}, Springer-Verlag, Berlin.

\bibitem{Gutman:93} Gutman I. (1993). \emph{Some Properties of the Wiener Polynomials}, Graph Theory Notes New York 25: 13–18.

\bibitem{Gutman:97} Gutman I., Linert W., Lukovits I. and Dobrynin A. A. (1997). \emph{Trees with Extremal Hyper-Wiener index: Mathematical basis and chemical applications}, J. Chem. Inf. Comput. Sci. 37: 349-354.

\bibitem{Hongbin-Idrees-Nizami-Munir:17} M.~Hongbin, M.~Idrees, A.R.~Nizami and M.~Munir, \emph{Generalized M\"{o}bius Ladder and Its Metric Dimension}, arXiv:1708.05199v1.

\bibitem{Hosoya:88} H. Hosoya, \emph{On Some Counting Polynomials in Chemistry}, Discr. Appl. Math. 19 (1988), 239-257.

\bibitem{Kishori:12} Kishori P. Narayankar, Lokesh S. B., Veena Mathad and Ivan Gutman.(2012).\emph{Hosoay Polynomial of Hanoi Graphs}, Kragujevac Journal of Mathematics, Volume 36, Number 1: 51-57.

\bibitem{Klein:95} Klein D. J., Lukovits I., Gutman I.(1995). \emph{On the Definition of the Hyper-Wiener Index for Cycle-Containing Structures}, J. Chem. Inf. Comput. Sci. , 35: 50-52.

\bibitem{Lukovits:94} Lukovits I., Linert W.(1994). \emph{A Novel Definition of the Hyper-Wiener Index for Cycles}, J. Chem. Inf. Comput. Sci. 34: 899-902.

\bibitem{Diudea:02} Mircea V. Diudea.(2002). \emph{Hosoya Polynomial in Tori}, Commun. Math. Comput. Chem. (MATCH). 45: 109-122.

\bibitem{Plavsic:93} Plavsic D., Nikolic S., Trinajstic N. and Mihalic Z.(1993). \emph{On the Harary Index for the Characterization of Chemical Graphs}, J. Math. Chem. 12: 235-250.

\bibitem{Randic:93}  Randic M.(1993). \emph{Novel Molecular Descriptor for Structure-Property Studies}, Chem. Phys. Lett. 211: 478-483.

\bibitem{Randic2:93} Randic M., Guo X., Oxley T. and Krishnapriyan H.(1993). \emph{Wiener matrix: Source of Novel Graph Invariants}, J. Chem. Inf. Comput. Sci. 33: 709-716.

\bibitem{Tratch:90} Tratch S.S., Stankevitch M.I. and Zefirov N.S.(1990). \emph{Combinatorial Models and Algorithms in Chemistry. The Expanded Wiener Number-A novel Topological Index}, J. Comput. Chem. 11: 899-908.

\bibitem{Wiener:47} Wiener H.(1947). \emph{Structural Determination of Paraffin Boiling Points}, J. Amer. Cherm. Soc. 69: 17-20.

\end{thebibliography}
\end{document}